\documentclass[11pt]{amsart}

\usepackage{graphicx}
\usepackage{pinlabel}

\newtheorem{theorem}{Theorem}[section]
\newtheorem{proposition}[theorem]{Proposition}
\newtheorem{lemma}[theorem]{Lemma}
\newtheorem{corollary}[theorem]{Corollary}

\theoremstyle{definition}
\newtheorem{definition}[theorem]{Definition}

\newcommand{\A}{\mathcal{A}}

\newcommand{\dist}{\operatorname{dist}}

\begin{document}

\title[Arc distance equals level number]
{Arc distance equals level number}

\author{Sangbum Cho}
\address{Department of Mathematics\\
University of California at Riverside\\
Riverside, California 92521\\
USA}
\email{scho@math.ucr.edu}

\author{Darryl McCullough}
\address{Department of Mathematics\\
University of Oklahoma\\
Norman, Oklahoma 73019\\
USA}
\email{dmccullough@math.ou.edu}
\urladdr{www.math.ou.edu/$_{\widetilde{\phantom{n}}}$dmccullough/}
\thanks{The second author was supported in part by NSF grant DMS-0802424}

\author{Arim Seo}
\address{Department of Mathematics\\
California State University at San Bernar\-di\-no\\
San Bernar\-di\-no, California 92407\\
USA}
\email{aseo@csusb.edu}

\subjclass[2000]{Primary 57M25}

\date{\today}

\keywords{knot, one-bridge, genus, curve complex, arc complex, (1,1)-knot}

\begin{abstract}
Let $K$ be a knot in $1$-bridge position with respect to a genus-$g$
Heegaard surface that splits a $3$-manifold $M$ into two handlebodies $V$
and $W$. One can move $K$ by isotopy keeping $K\cap V$ in $V$ and $K\cap
W$ in $W$ so that $K$ lies in a union of $n$ parallel genus-$g$ surfaces
tubed together by $n-1$ straight tubes, and $K$ intersects each tube in two
arcs connecting the ends. We prove that the minimum $n$ for which this is
possible is equal to a Hempel-type distance invariant defined using the arc
complex of the two holed genus-$g$ surface.
\end{abstract}

\maketitle

\section*{Introduction}
\label{sec:intro}

A knot $K$ in a closed orientable $3$-manifold $M$ is said to be in
\textit{$1$-bridge position} with respect to a surface $F$ if $F$ is a
Heegaard surface that splits $M$ into two handlebodies $V$ and $W$, and
each of $K\cap V$ and $K\cap W$ is a single arc that is parallel into
$F$. We denote the $1$-bridge position of $K$ with respect to $F$ by $(F,
K)$, and the \textit{genus} of $(F,K)$ is the genus of $F$. A knot is
called a \textit{$(g, 1)$-knot} if it can be put in genus-$g$ $1$-bridge
position.

There is a natural way to reposition a knot in $1$-bridge position, called
\textit{level position.}  In a neighborhood $F\times [0,1]$ of $F$ in
$M$, one may take $n$ parallel copies of the form $F\times \{t\}$ and
tube them together with $n-1$ unknotted tubes to obtain a surface $G$ of
genus $gn$ in $F\times[0,1]$, where $g$ is the genus of $F$. We say that
$K$ lies in \textit{$n$-level position} with respect to $F$ if $K\subset
G$, and moreover $K$ meets each of the $n-1$ tubes in two arcs, each of
which connects the two ends of the tube. As we will see below, every
$1$-bridge position of $K$ is isotopic keeping $K\cap V$ in $V$ and $K\cap
W$ in $W$ into some $n$-level position. The minimum such $n$ is an
invariant of the $1$-bridge position, called the \textit{level number.} Of
course, the minimum level number over all genus-$g$ $1$-bridge positions of
a $(g, 1)$-knot is an invariant of the knot.

Level position was used by
M.~Eudave-Mu\~noz~\cite{Eudave-Munoz1,Eudave-Munoz2} to obtain closed
incompressible surfaces in the complements of $(1,1)$-knots.

In this note, we use an invariant of a $1$-bridge position, called its
\textit{arc distance.} This is a version of a well-known complexity of a
Heegaard splitting introduced by J. Hempel in \cite{Hem} and defined using
the curve complex of the Heegaard surface. D. Bachman and S. Schleimer have
used a more general and somewhat different definition of arc distance to
obtain information about bridge positions of knots~\cite{B-S}. To define
our arc distance, write $K\cap F=\{x,y\}$. The isotopy classes of arcs in
$F$ from $x$ to $y$ form the vertices of a simplicial complex called the
\textit{arc complex} of $F - \{x, y\}$. The arc distance
of the $1$-bridge position is the minimum distance (simplicial distance
in the $1$-skeleton of the arc complex) between the collection of vertices
represented by arcs in $F$ from $x$ to $y$ that are parallel to $K \cap V$
in $V$ and the analogous collection for $K \cap W$.

Our main result, Theorem~\ref{thm:nLevels}, says that the arc distance of a
$1$-bridge position of $K$ equals its level number. Although the proof is
not especially difficult, this fact seems noteworthy in that although many
such Hempel-type invariants have been defined and used, this appears to be
the first that gives a concrete and natural geometric meaning to every
possible value of the invariant, rather than just small values.

Theorem~\ref{thm:nLevels} for the case $g=1$ appeared in the third author's
dissertation.

We are grateful to the referee for a careful reading and for suggesting
improvements to the manuscript.

\section{Leveling a $(g, 1)$-knot}
\label{sec:leveling}

Suppose that $K$ is in $1$-bridge position with respect to $F$, which
splits $M$ into two handlebodies $V$ and $W$. A \textit{shadow} of $K\cap
V$ is an arc in $F$ isotopic to $K\cap V$, relative to $K\cap F$, through
arcs in $V$. A shadow of $K \cap W$ is defined similarly. A
\textit{Heegaard isotopy} of $K$ is a (piecewise-linear) isotopy of $K$
such that $K\cap V$ stays in $V$ and $K\cap W$ stays in $W$ at all times.
The resulting knot may not be in strict $1$-bridge position, since the arc
$K\cap V$ may be moved to meet $F$ in its interior or even to be a shadow
of $K \cap V$.

A \textit{$1$-leveling} of a knot $K$ in $1$-bridge position with respect
to $F$ is a Heegaard isotopy that ends with a knot $K'\subset F$. For
$n\geq 2$, an \textit{$n$-leveling} of $K$ is a Heegaard isotopy taking $K$
to a knot $K'$ which may be described as follows: Fix a collar $F\times
[0,1]$ of $F$ in $W$, with $F=F\times\{0\}$. Let $0=t_1<t_2<\cdots
<t_n=1$ be a sequence of values, and put $F_i=F\times \{t_i\}\subset
F\times [0,1]$. Let $D_1,\ldots\,$, $D_{n-1}$ be a collection of disks in
$F$ with $D_i\cap D_{i+1}=\emptyset$.  Denote by $T_j$ the tube $\partial
D_j\times [t_j,t_{j+1}]$ connecting $F_j$ and $F_{j+1}$ for each $1\leq
j\leq n-1$. From the union $F_1 \cup T_1 \cup \cdots \cup F_{n-1} \cup
T_{n-1} \cup F_n$, remove the interiors of $D_j \times \{t_j\}$ and $D_j
\times \{t_{j+1}\}$ for $1\leq j\leq n-1$ to get a closed surface $G$ of
genus $gn$ where $g$ is the genus of $F$. Then
\begin{enumerate}
\item $K'\subset G$
\item $K'\cap T_j$ consists of two arcs, each connecting two boundary
circles of $T_j$, for each $1\leq j\leq n-1$.
\end{enumerate}
\noindent Necessarily, $K'\cap F_1$ and $K'\cap F_n$ are single arcs, and
$K'\cap F_i$ is a pair of arcs for each $2\leq i\leq n-1$. The knot $K'$ is
said to be in \textit{$n$-level position} with respect to~$F$.

If $K$ is in level position with respect to $F$, then there is a knot in
$1$-bridge position with respect to $F$ which is Heegaard isotopic to $K$.
Conversely, we have
\begin{proposition}
Let $K$ be in $1$-bridge position with respect to $F$. Let $n$ be the
minimum number of intersection points of shadows $\alpha_V$ and $\alpha_W$
of $K\cap V$ and $K\cap W$ respectively. Then $K$ is Heegaard isotopic to a
knot in $k$-level position with respect to $F$ for some $k < n$.\par
\label{prop:level}
\end{proposition}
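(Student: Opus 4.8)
The plan is to put $K$ into a normal form determined by a minimal pair of shadows, and then to build the tubed surface $G$ directly from the resulting diagram on $F$, inserting one tube for each crossing. First, fix shadows $\alpha_V$ and $\alpha_W$ of $K\cap V$ and $K\cap W$ realizing the minimum number $n$ of intersection points. Since the endpoints $x,y$ of the two arcs are always shared, this means $\alpha_V$ and $\alpha_W$ are embedded arcs in $F$ meeting transversely in exactly $n-2$ interior points (minimality also makes $\alpha_V\cup\alpha_W$ bound no bigon in $F$, but that is needed only to know that $n-2$ is the correct crossing count). Using the product disks that realize the two parallelisms, a Heegaard isotopy first moves $K\cap W$ into a thin collar $F\times(0,1]$ of $F$ in $W$ so that it projects onto $\alpha_W$, and moves $K\cap V$ to lie just below $F$, projecting onto $\alpha_V$. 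Then, since $K\cap V$ is parallel into $F$, a further Heegaard isotopy pushes $K\cap V$ up across $F$ into the collar, to a level strictly below that of $K\cap W$; this is legitimate because, as the arc crosses $F$, it merely transfers from the $V$-part to the $W$-part of $K$, with the $V$-part staying in $\overline V$ and the $W$-part in $\overline W$ throughout (even though $K$ meets $F$ transiently in extra points). The result is a knot $K_0\subset F\times(0,1)\subset W$ whose projection to $F$ is the graph $\alpha_V\cup\alpha_W$ and whose diagram has $n-2$ crossings, all with the $\alpha_W$-strand over the $\alpha_V$-strand.

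Since $K_0$ now lies in $W$, any isotopy of it inside $W$ is a Heegaard isotopy, so it remains only to isotope $K_0$, inside $F\times(0,1)$, onto a level surface with at most $(n-2)+1=n-1$ levels. I would do this by processing the crossings in the order $c_1,\dots,c_{n-2}$ in which they occur along $\alpha_W$ and introducing, for the $j$-th crossing, a new parallel copy $F_{j+1}$ tubed to the previous level by a tube $T_j$ whose foot disk $D_j$ is a thin band following a suitable sub-arc of $\alpha_V\cup\alpha_W$ (roughly: run along $\alpha_W$ up to $c_j$ and then along $\alpha_V$, on the side keeping $\operatorname{int}D_j$ off the graph). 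Routing $K_0$ through the resulting tubes so that it climbs monotonically, each $\alpha_W$-strand passing one level above the $\alpha_V$-strand at the corresponding crossing, places $K_0$ in the genus $g(n-1)$ surface $G=F_1\cup T_1\cup\cdots\cup T_{n-2}\cup F_{n-1}$, meeting each tube in two arcs connecting its two ends. This is cleanest organized as an iteration in which each step inserts one tube (hence one level) and removes one crossing from an auxiliary reduced diagram; the base case is $n=2$, i.e.\ no crossings, where $\alpha_V\cup\alpha_W$ is an embedded circle in $F$ and $K_0$ is isotoped onto it, giving $1$-level position. In every case the number of levels used is one more than the number of tubes, hence at most $n-1<n$.

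The step I expect to be the real obstacle is arranging the tubes in the \emph{stacked} form that the definition requires --- $T_j$ joining the consecutive copies $F_j$ and $F_{j+1}$, with $D_j\cap D_{j+1}=\emptyset$ --- rather than as $n-2$ tubes all attached between a single pair of parallel copies. The difficulty is that the crossings appear in one order along $\alpha_W$ but generally in a different order along $\alpha_V$, so one cannot simply place $T_j$ at the point $c_j$; the foot disks must instead be threaded along $\alpha_V$ in a nested, alternating fashion (consecutive disks disjoint, non-consecutive ones allowed to overlap), and one must verify that the knot, routed through them, meets each tube in exactly two arcs and that no extra level is forced. Carrying out this bookkeeping carefully, and checking that the closed surface one produces really has the prescribed level form, is the heart of the proof; the rest is routine unwinding of the definitions of shadow, Heegaard isotopy, and level position.
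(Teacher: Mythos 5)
Your route is genuinely different from the paper's: the paper deliberately gives no direct proof of Proposition~\ref{prop:level}, instead deducing it from Theorem~\ref{thm:nLevels} together with Theorem~\ref{thm:connected} --- the shadows, viewed as arcs in the two-holed surface $\Sigma$, meet in $n-2$ interior points, so their arc-complex distance is at most $n-1$, and Theorem~\ref{thm:nLevels} then levels $K$ with $k\le n-1<n$ levels. You attempt the direct construction that the authors describe as ``cumbersome to explain and tedious to read,'' and your sketch stops exactly at that cumbersome step: the existence of a \emph{stacked} system of foot disks $D_1,\ldots,D_{n-2}$ with $D_j\cap D_{j+1}=\emptyset$, together with the verification that the rerouted knot meets each tube in exactly two arcs spanning its ends. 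You name this ``the heart of the proof'' and do not carry it out; since the crossings occur in different orders along $\alpha_V$ and $\alpha_W$, choosing the bands and checking the two-arc and disjointness conditions is the entire content of a direct proof, so what you have is a plan rather than a proof. It is worth noting that your band ``along $\alpha_W$ up to $c_j$, then along $\alpha_V$'' is essentially a regular neighborhood of the surgered arc $m'$ of Lemma~\ref{lem:connected}; the paper organizes precisely this bookkeeping by iterating that surgery to get a path $s_0,s_1,\ldots,s_n$ of consecutively disjoint arcs, and then the converse half of Theorem~\ref{thm:nLevels} is where the disks $D_j$ (neighborhoods of the $s_j$) and tubes are actually built. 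Completing your argument would amount to reproving those two results in disguised form, and what each approach buys is clear: the paper's detour through the arc complex trades the delicate disk/tube bookkeeping for a one-line induction, and yields the stronger equality of Theorem~\ref{thm:nLevels} along the way.

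Separately, two of your ``legitimacy'' claims conflict with the paper's definition of Heegaard isotopy, under which the arc $K\cap V$ must remain in the closed handlebody $V$ at all times; it may at most be flattened onto $F$ (this is what the paper means when it says the arc ``may be moved \dots\ to be a shadow of $K\cap V$''). Pushing $K\cap V$ ``up across $F$ into the collar, to a level strictly below that of $K\cap W$'' places the image of $K\cap V$ in the interior of $W$, which is not a Heegaard isotopy, and for the same reason the assertion ``since $K_0$ now lies in $W$, any isotopy of it inside $W$ is a Heegaard isotopy'' is false as stated. This is repairable: keep the image of $K\cap V$ on $F=F_1$ itself --- which is exactly where it sits in the paper's leveled knot $K'$, whose only intersection with $V$ is the arc $K'\cap F_1\subset F$ --- and perform all subsequent isotopies inside $W$ fixing that arc. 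But as written, the normal form from which your tube construction starts is not reachable by a Heegaard isotopy, so even the set-up of your induction needs to be adjusted before the main gap can be addressed.
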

\noindent We will not give a direct proof of Proposition~\ref{prop:level}.
Although such a proof is not difficult, it is somewhat cumbersome to
explain and tedious to read. And it is not needed, for as we will see,
Proposition~\ref{prop:level} follows directly from our main result,
Theorem~\ref{thm:nLevels}, together with the connectivity of the arc
complex discussed in Section~\ref{sec:arc_complex} below.

In view of Proposition~\ref{prop:level}, we may make the following
definition for a knot $K$ in genus-$g$ $1$-bridge position with respect to
$F$:
\begin{enumerate}
\item The \textit{level number} of the $1$-bridge position $(F, K)$ is the
minimum $n$ such that $K$ is Heegaard isotopic to a knot in $n$-level
position with respect to $F$.
\item The \textit{genus-$g$ level number} of $K$ is the minimum level
number over all genus-$g$ $1$-bridge positions of $K$.
\end{enumerate}

\section{The arc complex}
\label{sec:arc_complex}

Let $\Sigma$ be a genus-$g$ surface with two holes, $g \geq 0$, and denote
by $C_1$ and $C_2$ the two boundary circles of $\Sigma$. The \textit{arc
complex} $\A(\Sigma)$ of $\Sigma$ is a simplicial complex defined as
follows.  The vertices are isotopy classes of properly embedded arcs in
$\Sigma$ connecting $C_1$ and $C_2$, and a collection of $k+1$ vertices
spans a $k$-simplex if it admits a collection of representative arcs which
are pairwise disjoint.  In this section we will show that $\A(\Sigma)$ is
connected. Indeed, as we will explain, it is contractible.

Arc complexes have been used in Teichm\"uller theory by J. Harer
\cite{Harer2,Harer1} (see also A. Hatcher \cite{Hatcher}) and R. C. Penner
\cite{Penner1}. In particular, many arc complexes are known to be
contractible, although we have not found our particular case in the
existing literature.

Let $v$ and $w$ be vertices of $\mathcal A(\Sigma)$.  Define $v \cdot w$ to
be the minimal cardinality of $l \cap m$ where $l$ and $m$ are arcs in
$\Sigma$ which represent $v$ and $w$, respectively, and intersect
transversely.
\begin{figure}
\labellist
\pinlabel \large $C_1$ [B] at 49 119
\pinlabel \large $C_1$ [B] at 205 119
\pinlabel \large $C_2$ [B] at 40 -9
\pinlabel \large $C_2$ [B] at 196 -9
\pinlabel $l$ [B] at 30 99
\pinlabel $m$ [B] at 72 99
\pinlabel $p$ [B] at 42 70
\pinlabel $m'$ [B] at 179 66
\endlabellist
\begin{center}
\includegraphics{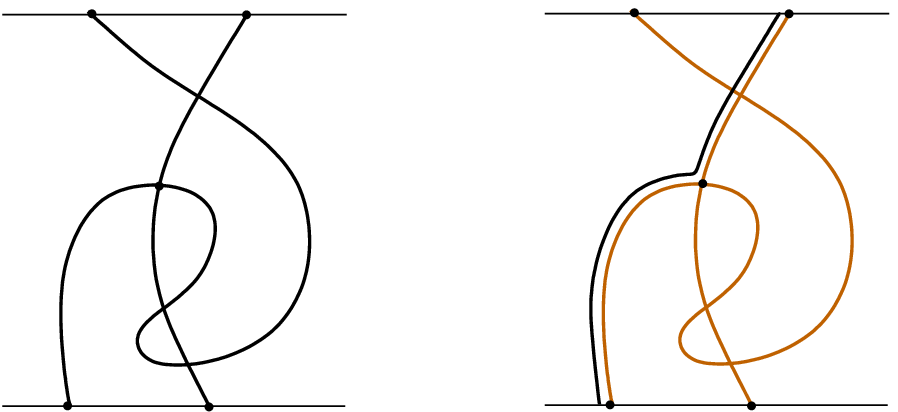}
\caption{} \label{conn}
\end{center}
\end{figure}

\begin{lemma}
Let $v$ and $w$ be vertices of $\mathcal A(\Sigma)$ and suppose $v \cdot w
> 0$. Then there exists a vertex $w'$ such that $w \cdot w' = 0$ and $w'
\cdot v < w \cdot v$.\par
\label{lem:connected}
\end{lemma}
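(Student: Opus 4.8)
The plan is to do surgery on an outermost subarc of a minimal representative, as in the usual proofs of connectivity of arc complexes. Choose arcs $l$ and $m$ representing $v$ and $w$ that meet transversely in exactly $v\cdot w$ points, and set $k=v\cdot w\ge 1$. Write $a_1\in C_1$, $a_2\in C_2$ for the endpoints of $l$ and $b_1\in C_1$, $b_2\in C_2$ for those of $m$; after a small isotopy we may assume $l$ and $m$ have disjoint sets of endpoints. Let $l_0$ be the subarc of $l$ running from $a_1$ to the first point $p$ of $l\cap m$ met along $l$; its interior contains no point of $l\cap m$, hence no point of $m$ at all. The point $p$ splits $m$ into a subarc $m_1$ from $b_1$ to $p$ and a subarc $m_2$ from $p$ to $b_2$. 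Let $m'$ be obtained from $m_2\cup l_0$ by rounding the corner at $p$ and pushing the result slightly off both $m$ and $l$; then $m'$ is a properly embedded arc joining $C_1$ to $C_2$, and so represents a vertex $w'$ of $\A(\Sigma)$. No essentiality check is needed here, since by definition every properly embedded arc joining the two distinct circles $C_1$ and $C_2$ is a vertex of $\A(\Sigma)$. (Compare Figure~\ref{conn}.)

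First I would check that the push-off can be arranged so that $m'$ is embedded and disjoint from $m$, which gives $w\cdot w'=0$. This is purely local at $p$: in coordinates in which $m$ and $l$ are the two axes, with $m_2$ leaving $p$ along the negative $x$-direction and $l_0$ along the positive $y$-direction, one rounds the corner of $m_2\cup l_0$ into the quadrant $\{x<0,\ y>0\}$, pushes the $m_2$-part of $m'$ into $\{y>0\}$, and pushes the $l_0$-part into $\{x<0\}$. Near $p$ this region avoids the coordinate axes and the germ of $m_1$; since in addition the interior of $l_0$ is already disjoint from $m$, and $m_1\cap m_2=\{p\}$, a sufficiently small push-off keeps $m'$ disjoint from $m$, and truncating the two pushed arcs before they meet keeps $m'$ embedded. (The endpoint of $m'$ on $C_1$ is a point near $a_1$, which causes no trouble since $m$ meets $C_1$ only at $b_1\ne a_1$.)

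Next I would bound $|m'\cap l|$. The $l_0$-part of $m'$ has been pushed off $l$ and so meets $l$ not at all; the rounded corner near $p$ lies in a region disjoint from $l$; and along the interior of $m_2$ each point of $l\cap m$ survives the push-off as exactly one transverse point of $m'\cap l$, because there $l$ crosses from one side of $m$ to the other, while no new intersections with $l$ are created elsewhere. Hence $|m'\cap l|$ equals the number of points of $l\cap m$ lying in the interior of $m_2$, which is at most $k-1$ because $p$ is not among them. Therefore $w'\cdot v\le|m'\cap l|\le k-1<k=v\cdot w=w\cdot v$, which together with $w\cdot w'=0$ is exactly the conclusion of the lemma; in particular $w'\ne w$.

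I expect the only real work to be the bookkeeping in the push-off steps of the second and third paragraphs: arranging the sides and the corner-rounding consistently so that $m'$ is simultaneously embedded, disjoint from $m$, and transverse to $l$ with the intersection point $p$ genuinely eliminated rather than merely relocated along $l$. Once that is done the inequality on intersection numbers is immediate, and iterating the lemma (replacing $w$ by $w'$ until its intersection number with $v$ reaches $0$) yields a path from $w$ to $v$ in the $1$-skeleton of $\A(\Sigma)$.
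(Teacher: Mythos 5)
Your proposal is correct and is essentially the paper's argument: the same surgery at an outermost intersection point, replacing $m$ by the union of a subarc of $m$ with a subarc of $l$ that misses $m$ in its interior, then pushing off to get $w\cdot w'=0$ and $w'\cdot v<w\cdot v$. The only differences are cosmetic (you run the $l$-subarc to $C_1$ rather than $C_2$, and you spell out the corner-rounding and push-off details that the paper leaves as "after a slight isotopy").
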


\begin{proof}
Choose arcs $l$ and $m$ representing the vertices $v$ and $w$,
respectively, so that $|l \cap m| = v \cdot w$.  Since $v \cdot w > 0$, we
have at least one intersection point of $l$ and $m$. Let $p$ be the
intersection point for which the sub-arc of $l$ connecting $p$ and $C_2$ is
disjoint from $m$.  Denote by $m'$ the union of this sub-arc and the
sub-arc of $m$ connecting $p$ and $C_1$ (see Figure \ref{conn}).  Then the
arc $m'$ is disjoint from $m$ and has fewer intersections with $l$ than $m$
had (after a slight isotopy) since at least $p$ no longer counts. Letting
$w'$ be the vertex represented by $m'$, we have $w' \cdot v < w \cdot v$
and $w \cdot w' = 0$.
\end{proof}

\begin{theorem}
The arc complex $\mathcal A(\Sigma)$ is connected. In fact, if
representative arcs of $v$ and $w$ intersect transversely in $k$ points,
then the distance from $v$ to $w$ is at most~$k+1$.\par
\label{thm:connected}
\end{theorem}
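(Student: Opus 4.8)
The plan is to derive the theorem directly from Lemma~\ref{lem:connected} by a short induction. In fact I would prove the slightly sharper statement that $d(v,w)\le v\cdot w+1$ for all vertices $v,w$ of $\A(\Sigma)$; since any transverse representatives of $v$ and $w$ meet in at least $v\cdot w$ points, this immediately gives the bound $d(v,w)\le k+1$ claimed in the theorem, and connectedness follows at once because any two vertices admit representatives meeting transversely in finitely many points.

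I would induct on $n=v\cdot w$. If $n=0$, then $v$ and $w$ have disjoint representatives, so either $v=w$ or $\{v,w\}$ spans a $1$-simplex of $\A(\Sigma)$; in either case $d(v,w)\le 1=n+1$. For the inductive step, suppose $n\ge 1$ and that the bound holds for all pairs of vertices whose geometric intersection number is less than $n$. Applying Lemma~\ref{lem:connected} to $v$ and $w$ produces a vertex $w'$ with $w\cdot w'=0$ and $w'\cdot v<w\cdot v=n$, hence $v\cdot w'\le n-1$. By the inductive hypothesis $d(v,w')\le (n-1)+1=n$, while $d(w,w')\le 1$ since $w\cdot w'=0$, so $\{w,w'\}$ is a simplex (or $w=w'$). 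The triangle inequality then gives $d(v,w)\le d(v,w')+d(w',w)\le n+1$, completing the induction.

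I do not anticipate any real obstacle here: Lemma~\ref{lem:connected} already does all the work. The one point that needs a little care is to run the induction on the geometric intersection number $v\cdot w$, rather than on the number of crossings of some fixed pair of transverse representatives, so that the quantity being inducted on is guaranteed to drop by at least one each time the lemma is invoked (the lemma only controls $w'\cdot v$ versus $w\cdot v=v\cdot w$, not versus an arbitrary crossing count). Translating the sharper bound back into the statement as phrased is then just the observation that $v\cdot w\le k$ whenever $v$ and $w$ have transverse representatives meeting in $k$ points.
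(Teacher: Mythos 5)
Your proof is correct and follows essentially the same route as the paper: induct on $v\cdot w$, use Lemma~\ref{lem:connected} to drop the intersection number at each step, and conclude via the edge $\{w,w'\}$ and the triangle inequality. Your remark about inducting on the geometric intersection number rather than a fixed crossing count is exactly the (implicit) content of the paper's one-line inductive argument.
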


\begin{proof}
Let $v$ and $w$ be any two vertices of $\mathcal A(\Sigma)$.  If $v \cdot w
= 0$, then $v$ and $w$ are connected by an edge of $\mathcal A(\Sigma)$, so
lie at distance $1$. If $v \cdot w = k > 0$, then Lemma
\ref{lem:connected} and induction give the result.
\end{proof}

In fact, $\mathcal A(\Sigma)$ is contractible. This can be proven fairly
quickly using Proposition~3.1 of~\cite{C}. Since we do not need this fact,
we do not include the argument.

\section{The arc distance of a $(g, 1)$-knot}
\label{sec:distance}

In Section \ref{sec:arc_complex}, we showed that the arc complex
$\A(\Sigma)$ is connected. Thus, for any two vertices $v$ and $w$ of
$\A(\Sigma)$, we can define the \textit{distance} between $v$ and $w$,
$\dist (v, w)$, to be the distance in the 1-skeleton of $\A(\Sigma)$ from
$v$ to $w$ with the usual path metric.

Keeping the notation of previous sections, let $K$ be a $(g, 1)$-knot in
$1$-bridge position with respect to the Heegaard surface $F$. By removing
from $F$ a small open neighborhood of the two points $K \cap F$, we obtain
a 2-holed genus-$g$ surface $\Sigma$. Denote by $k$ and $k'$ the two arcs
$V \cap K$ and $W \cap K$, and let $s$ and $s'$ be shadows of $k$ and $k'$
respectively.  Then the arcs $s \cap \Sigma$ and $s' \cap \Sigma$ represent
vertices of the arc complex $\A(\Sigma)$. We will call $s \cap
\Sigma$ and $s' \cap \Sigma$ \textit{shadows} of $k$ and $k'$ again.

\begin{definition}
Let $K$ be in genus-$g$ $1$-bridge position with respect to $F$.
\begin{enumerate}
\item The \textit{arc distance} of $(F,K)$ is the minimum of $\dist (v,
v')$ over all the vertices $v$ and $v'$ represented by shadows of $K \cap
V$ and $K \cap W$, respectively.
\item The \textit{genus-$g$ arc distance} of $K$ is the minimum of the arc
distance of $(F, K)$ over all genus-$g$ $1$-bridge positions $(F, K)$ of
$K$.
\end{enumerate}
\end{definition}

We observe that the trivial knot is the only knot of arc distance $0$, and
a knot in $S^3$ has genus-$1$ arc distance $1$ if and only if it is
a nontrivial torus knot. Figure \ref{distance_example} shows that the
genus-$1$ arc distance of the figure-$8$ knot is at most $2$, and hence is
$2$ since the figure-$8$ knot is not a torus knot.
\begin{figure}
\labellist
\pinlabel $k$ [B] at 58 170
\pinlabel $k'$ [B] at 104 101
\pinlabel $s$ [B] at 223 154
\pinlabel $s'$ [B] at 272 92
\pinlabel $s''$ [B] at 224 35
\pinlabel $s''$ [B] at 232 9
\pinlabel $s$ [B] at 189 9
\pinlabel $s'$ [B] at 271 9
\endlabellist
\begin{center}
\includegraphics{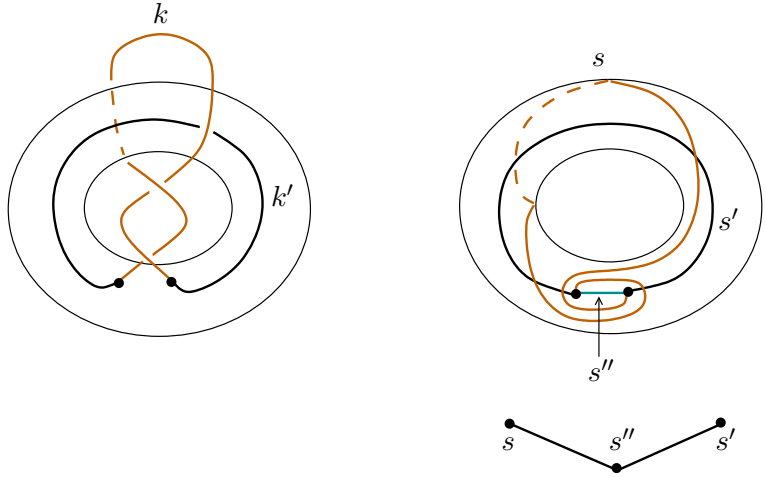}
\caption{A genus-$1$ $2$-level position of the figure-$8$ knot, having
arc distance $2$.}
\label{distance_example}
\end{center}
\end{figure}

\begin{theorem} Let $K$ be a nontrivial knot which is in $1$-bridge
position with respect to $F$.  If $K$ is in $n$-level position with respect
to $F$, then the arc distance of $(F, K)$ is at most $n$. Conversely, if
the arc distance of $(F, K)$ is $n$, then $K$ is Heegaard isotopic to a
knot in $n$-level position with respect to $F$. As a consequence, the arc
distance of $(F, K)$ equals the level number of $(F, K)$.\par
\label{thm:nLevels}
\end{theorem}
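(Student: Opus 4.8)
The plan is to prove two inequalities: that an $n$-level position forces the arc distance to be at most $n$, and conversely that arc distance $n$ lets us produce an $n$-level position. Since the level number is by definition the smallest $n$ admitting an $n$-level position, the equality of arc distance and level number is then immediate.

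For the first inequality, I would start from a knot $K'$ in $n$-level position that is Heegaard isotopic to $K$, with level surfaces $F_1,\dots,F_n$, disks $D_1,\dots,D_{n-1}\subset F$, and tubes $T_j=\partial D_j\times[t_j,t_{j+1}]$. The knot $K'$ itself determines a $1$-bridge position Heegaard isotopic to $(F,K)$, hence with the same arc distance: the single arc $K'\cap F_1$, pushed slightly into $V$, is the $V$-arc, and the complementary arc of $K'$, pushed into $W$, is the $W$-arc; then $K'\cap F=\{x,y\}\subset\partial D_1$, and $\alpha_0:=K'\cap F_1$ is already a shadow of the $V$-arc. To obtain a shadow $\alpha_n$ of the $W$-arc, project the part of $K'$ lying above $F_1$ straight down the collar onto $F$, collapsing each $T_j$ to a pair of arcs across $D_j$; after removing inessential intersections this is an embedded arc from $x$ to $y$. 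The heart of this direction is a counting estimate: that part of $K'$ crosses only the $n-1$ slabs $F\times[t_j,t_{j+1}]$, and one can arrange the projection so that $\alpha_n$ meets $\alpha_0$ in at most $n-1$ points. Theorem~\ref{thm:connected} then gives $\dist(\alpha_0,\alpha_n)\le(n-1)+1=n$, so the arc distance is at most $n$.

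For the converse, suppose the arc distance of $(F,K)$ is $n$ and pick a geodesic $\alpha_0,\alpha_1,\dots,\alpha_n$ in $\A(\Sigma)$ with $\alpha_0$ a shadow of $K\cap V$ and $\alpha_n$ a shadow of $K\cap W$. Realize each $\alpha_j$ by an arc in $F$ from $x$ to $y$ with consecutive arcs disjoint (possible since consecutive vertices span an edge). I would then build the $n$-level position by stacking: for $1\le j\le n-1$ place a copy of $\alpha_j$ near a level surface at height between $t_j$ and $t_{j+1}$, use $\alpha_0$ pushed into $V$ for the $V$-arc and $\alpha_n$ near $F$ as a shadow of the $W$-arc, and for each $j$ route the knot from ``following $\alpha_{j-1}$'' to ``following $\alpha_j$'' over a tube $T_j$ whose base disk $D_j$ is a small disk chosen so that $K'\cap T_j$ is exactly two spanning arcs. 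Disjointness of $\alpha_j$ from $\alpha_{j-1}$ and $\alpha_{j+1}$ is what allows the $D_j$ to be taken with $D_j\cap D_{j+1}=\emptyset$ and keeps $K'\cap F_i$ a pair of spanning arcs at each intermediate level. The resulting $K'$ is Heegaard isotopic to $K$ and in $n$-level position, so the level number is at most $n$.

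The main obstacle is this stacking construction: one must simultaneously arrange that every tube meets $K'$ in exactly two spanning arcs, that $D_j\cap D_{j+1}=\emptyset$, and --- the delicate point --- that a geodesic of length $n$ yields exactly $n$ level surfaces and $n-1$ tubes, so that no sharpness is lost. I expect this needs a preliminary normalization of the chosen representatives of the $\alpha_j$ (consecutive ones disjoint, and no superfluous intersections among the others) followed by a careful choice of the heights $t_1<\dots<t_n$ at which the successive transitions are made. I would also remark that nontriviality of $K$ is what rules out the degenerate case --- the trivial knot has arc distance $0$ but level number $1$ --- which is exactly the situation where the geodesic has length $0$ and there are no tubes to construct.
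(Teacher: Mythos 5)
There is a genuine gap in the first direction. Your plan is to project the part of $K'$ above $F_1$ down to $F$, claim that the resulting shadow $\alpha_n$ of the $W$-arc can be arranged to meet $\alpha_0=K'\cap F_1$ in at most $n-1$ points, and then invoke Theorem~\ref{thm:connected}. But nothing in the definition of $n$-level position bounds that intersection number: the intermediate level arcs $K'\cap F_i$ (and the top arc $K'\cap F_n$) may wind around $F$ arbitrarily, so their projections can cross $\alpha_0$ in arbitrarily many points, and there is no reason a Heegaard isotopy can reduce this to $n-1$ --- already for $n=2$ the two level arcs of a $2$-level position can have huge minimal intersection number, since distance $\le 2$ in $\A(\Sigma)$ places no small bound on intersection numbers of representatives. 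The paper gets the bound by a different mechanism: after straightening the knot (an isotopy pulling each level's arcs down the tubes, so that $K$ becomes $k\cup(a\cup b)\times[t_1,t_n]\cup k'$), one uses the arcs $\mu_j$ lying in the disks $D_j\times\{t_j\}$ that fill the tubes. Each level arc is disjoint from the interiors of the adjacent disks, so the projected sequence $k,\mu_1,\dots,\mu_{n-1},k'$ is an explicit edge-path of length $n$ in $\A(\Sigma)$ with endpoints shadows of $K\cap V$ and $K\cap W$; no intersection count between the two shadows is needed (and none is available). To repair your argument you would have to replace the counting estimate by this chain of tube arcs, which is the essential idea you are missing.

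Your converse direction is in the right spirit (turn a length-$n$ path $s_0,\dots,s_n$ into an $n$-level position), but the ``stacking over small base disks'' is exactly the part you acknowledge you cannot yet carry out, and as described it does not match the combinatorics of a level position: with small disks, the arc $K'\cap F_1$ must have both endpoints on $\partial D_1$ and each intermediate level must contain two arcs running between consecutive tube boundaries, which forces concatenations of the $s_j$ and makes the Heegaard isotopy back to $K$ unclear. The paper's construction avoids all of this by taking $D_j$ to be a regular neighborhood of $s_j$ truncated outside small neighborhoods $N_p,N_q$ of the punctures: the tube $T_j=\partial D_j\times[t_j,t_{j+1}]$ then encodes $s_j$, the disjointness $s_{j-1}\cap s_j=\{p,q\}$ gives $D_{j-1}\cap D_j=\emptyset$, the intermediate levels of $K'$ consist only of short arcs $\alpha_j,\beta_j$ inside $N_p,N_q$, and the Heegaard isotopy from $K=s_0\times\{t_1\}\cup(p\cup q)\times[t_1,t_n]\cup s_n\times\{t_n\}$ to $K'$ is immediate. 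You should adopt that choice of disks rather than small ones.
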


\begin{proof}
Suppose that $K$ is in $n$-level position with respect to $F$.
The case of $n = 1$ is clear. We will assume that $n \geq 3$. (The case of
$n = 2$ is similar but simpler.)
We describe
the surface $G$ as in Section \ref{sec:leveling}.  In particular, recall
that the tube $T_j$ connects two surfaces $F_j$ and $F_{j+1}$.  By an
isotopy, we may assume that the two arcs $K \cap T_j$ are vertical, that is
$K \cap T_j = (K \cap \partial D_j)\times [t_j, t_{j+1}]$.  Denote the arcs
$K \cap F_1$ and $K \cap F_n$ by $k$ and $k'$ respectively, and denote
the two arcs of $F_j \cap K$ by $\alpha_j$ and $\beta_j$ for each $2 \leq
j \leq n-1$.  Choose an arc $\mu_j$ properly embedded in $D_j \times
\{t_j\}$, connecting the two points $K \cap (\partial D_j \times \{t_j \})$
for each $1 \leq j \leq n-1$ (see Figure~\ref{dist1}).
\begin{figure}
\labellist
\pinlabel \Large $F_1$ [B] at 9 130
\pinlabel \Large $F_2$ [B] at 9 83
\pinlabel \Large $F_3$ [B] at 9 34
\pinlabel \large $T_1$ [B] at 115 124
\pinlabel \large $T_2$ [B] at 195 76
\pinlabel \large $T_3$ [B] at 277 28
\pinlabel $a$ [B] at 98 142
\pinlabel $b$ [B] at 79 121
\pinlabel $k$ [B] at 46 127
\pinlabel $k$ [B] at 260 143
\pinlabel $\mu_1$ [B] at 84 140
\pinlabel $\mu_2$ [B] at 164 94
\pinlabel $\mu_3$ [B] at 249 47
\pinlabel $\alpha_2$ [B] at 130 94
\pinlabel $\beta_2$ [B] at 57 87
\pinlabel $\beta_2$ [B] at 244 91
\pinlabel $\alpha_3$ [B] at 98 40
\pinlabel $\alpha_3$ [B] at 277 48
\pinlabel $\beta_3$ [B] at 208 45
\endlabellist
\begin{center}
\includegraphics{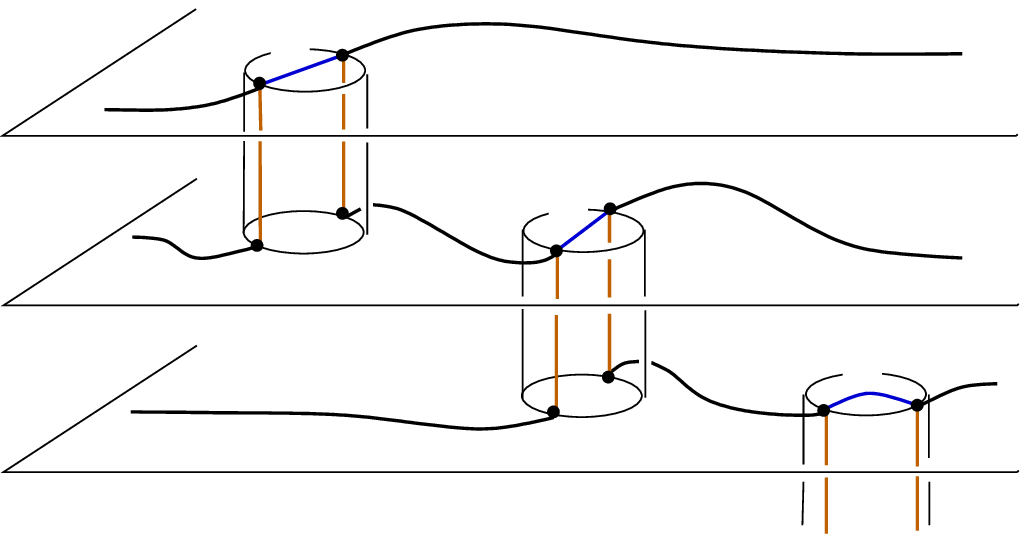}
\caption{}
\label{dist1}
\end{center}
\end{figure}

Let $a = a \times \{t_1\}$ and $b = b \times \{t_1\}$ be the endpoints of
$k$, with notation chosen so that $a\times\{t_2\}\in \alpha_2$ and
$b\times\{t_2\}\in \beta_2$. There is an isotopy $j_t$ of $F_2$ that moves
the endpoints of $\mu_2$ along $\alpha_2$ and $\beta_2$ until they reach $a
\times \{t_2\}$ and $b \times \{t_2\}$, stretching $\mu_2$ onto
$\alpha_2\cup \mu_2\cup \beta_2$. Extend $j_t$ to the isotopy $J_t =
j_t\times id_{[t_2,t_n]}$ on $F \times [t_2, t_n]$.

Consider the knot obtained from $K$ by replacing $K\cap (F\times [t_2,t_n])$
by $J_1(K\cap (F\times [t_2,t_n]))$. The original $K$ is isotopic to this new
knot by an isotopy supported on a small neighborhood of $F\times
[t_2,t_n]$, that resembles $J_t$ on $F\times [t_2,t_n]$. This isotopy pulls
$\alpha_2\cup \beta_2$ onto part of $K\cap T_1$, and stretches $\mu_2$ onto
$\alpha_2\cup \mu_2\cup \beta_2$, as $J_t$ did.

Calling the new knot $K$ again, we may notationally replace each
$\mu_2,\ldots\,$, $\mu_{n-1}$ and $k'$ by its image under $J_1$, each
$D_2,\ldots\,$, $D_{n-1}$ by its image, and so on. The new $\alpha_3$ and
$\beta_3$ end at $a\times\{t_3\}$ and~$b\times \{t_3\}$.

Repeat this process on each descending level. At the last stage (after
renaming), $K$ has been moved to $k \cup (a \cup b) \times [t_1, t_n]
\cup k'$ and we have the sequence of arcs $k$, $\mu_1,\ldots, \mu_{n-1}$,
$k'$, with endpoints lying in $a \times [t_1, t_n]$ and $b \times [t_1,
t_n]$.  After projecting $k$, $\mu_1,\ldots\,$, $\mu_{n-1}$, and $k'$ to
$F$, each intersects the next only in their endpoints. Therefore the
vertices represented by the projected arcs $k$ and $k'$ have distance at
most $n$ in the arc complex.

The projected $k$ and $k'$ are shadows of $K \cap V$ and $K \cap W$, where
$V$ and $W$ are the two handlebodies into which $F$ cuts $M$. Thus the
arc distance of $(F, K)$ is at most $n$.

Conversely, suppose that the arc distance of $(F, K)$ is $n$ for $n \geq 3$
(again the case $n=1$ is clear and we omit the case $n = 2$, which is
similar to $n\geq 3$). Denote by $p$ and $q$ the two points $K \cap
F$. Then we have a sequence of arcs $s_0, s_1, s_2, \cdots, s_{n-1}, s_n$
in $F$, each connecting $p$ and $q$, such that $s_0$ and $s_n$ are shadows
of $V \cap K$ and $W \cap K$, and $s_{j-1}$ meets $s_j$ only in their
endpoints $p$ and $q$ for $1 \leq j \leq n$.

Let $N_p$ and $N_q$ be disjoint regular neighborhoods of $p$ and $q$ in $F$
respectively. By a Heegaard isotopy, we may assume that each of $N_p \cap
(s_0 \cup s_1 \cup \cdots \cup s_n)$ and $N_q \cap (s_0 \cup s_1 \cup
\cdots \cup s_n)$ is contractible. In particular, any $s_i$ and $s_j$ meet
in $N_p$ only at the point $p$, and in $N_q$ only at the point $q$.  For $1
\leq j \leq n-1$, choose regular neighborhoods $D_j$ of $s_j \cap
\overline{F - (N_p \cup N_q)}$ in $\overline{F - (N_p \cup N_q)}$ so that
$s_0$ is disjoint from $D_1$, $s_n$ is disjoint from $D_{n-1}$, and
$D_{j-1}$ is disjoint from $D_j$.  For $1 \leq j \leq n-1$, denote the arcs
$s_j \cap N_p$ and $s_j \cap N_q$ by $\alpha_j$ and $\beta_j$ respectively,
and the points $\alpha_j \cap \partial N_p$ and $\beta_j \cap \partial N_q$
by $p_j$ and $q_j$ respectively (see Figure~\ref{dist2}).
\begin{figure}
\labellist
\pinlabel \Large $D_j$ [B] at 100 121
\pinlabel \Large $D_{j-1}$ [B] at 170 15
\pinlabel \Large $N_p$ [B] at 15 5
\pinlabel \Large $N_q$ [B] at 272 5
\pinlabel $p$ [B] at 43 44
\pinlabel $\alpha_j$ [B] at 46 76
\pinlabel $\alpha_{j-1}$ [B] at 68 38
\pinlabel $p_j$ [B] at 67 82
\pinlabel $p_{j-1}$ [B] at 51 21
\pinlabel $s_j$ [B] at 127 113
\pinlabel $s_{j-1}$ [B] at 117 16
\pinlabel $s_0$ [B] at 285 119
\pinlabel $q$ [B] at 240 44
\pinlabel $\beta_j$ [B] at 238 74
\pinlabel $\beta_{j-1}$ [B] at 222 60
\pinlabel $q_j$ [B] at 217 80
\pinlabel $q_{j-1}$ [B] at 212 48
\endlabellist
\begin{center}
\includegraphics{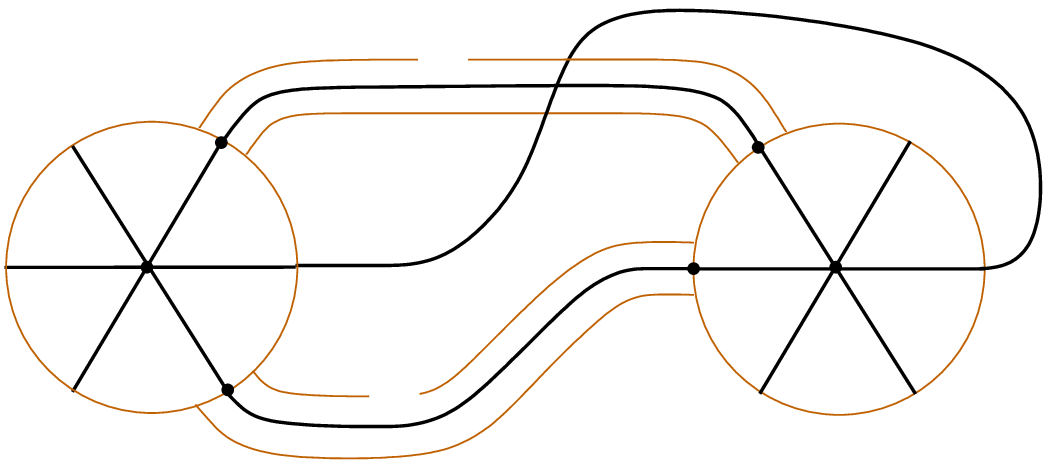}
\caption{}
\label{dist2}
\end{center}
\end{figure}

As in Section \ref{sec:leveling}, let $0 = t_1<t_2<\cdots <t_n=1$ be a
sequence of values, put $F_j = F \times \{t_j\}\subset F\times [0,1]\subset
W$, and construct a closed surface $G$ from the surfaces $F_j$ and the
tubes $T_j = \partial D_j \times [t_j, t_{j+1}]$. By a Heegaard isotopy, we
may assume that $K = s_0 \times \{t_1\} \cup (p \cup q) \times [t_1, t_n]
\cup s_n \times \{t_n\}$. Construct a knot $K'$ contained in $G$ so that:
\begin{enumerate}
\item $K' \cap F_1 = (s_0 \cup \alpha_1\cup \beta_1) \times \{t_1\} $,
\item $K' \cap F_j = (\alpha_{j-1} \cup \alpha_j \cup \beta_{j-1} \cup
\beta_j) \times \{t_j\}$, for $2 \leq j \leq n-1$,
\item $K' \cap F_n = (s_n \cup \alpha_{n-1} \cup \beta_{n-1}) \times
\{t_n\}$, and
\item $K' \cap T_j = (p_j \cup q_j) \times [t_j, t_{j+1}]$, for $1 \leq j
\leq n-1$.
\end{enumerate}
By construction, $K'$ lies in $n$-level position with respect to $F$.
There is a Heegaard isotopy from $K$ to $K'$ that moves each $\{p\}\times
[t_i,t_{i+1}]$ onto $\alpha_i\times \{t_i\}\cup \{p_i\}\times [t_i,t_{i+1}]
\cup \beta_{i+1}\times \{t_{i+1}\}$ and similarly for $\{q\}\times
[t_i,t_{i+1}]$.
\end{proof}

As we mentioned in Section \ref{sec:leveling}, Proposition~\ref{prop:level}
follows from Theorem~\ref{thm:nLevels}. For if $\alpha_V$ and $\alpha_W$
intersect in $n$ points, then as representative arcs of the vertices of the
arc complex $\mathcal{A}(\Sigma)$ they intersect in $n-2$ points. By
Theorem~\ref{thm:connected}, the distance from $\alpha_V$ to $\alpha_W$ is
at most $n-1$, so by Theorem~\ref{thm:nLevels} $K$ is Heegaard isotopic to
a knot in $k$-level position for some $k < n$.

From Theorem~\ref{thm:nLevels}, we have our main objective.
\begin{corollary}
Let $K$ be a nontrivial knot which can be put in genus-$g$ $1$-bridge
position. Then the genus-$g$ arc distance of $K$ equals the genus-$g$ level
number of~$K$.\par
\label{cor:equal}
\end{corollary}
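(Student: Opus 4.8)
The plan is to obtain Corollary~\ref{cor:equal} immediately from Theorem~\ref{thm:nLevels} by passing to minima. Both quantities in the statement are, by definition, minima of a numerical invariant taken over one and the same collection, namely the set $\mathcal{P}$ of all genus-$g$ $1$-bridge positions $(F,K)$ of $K$. Writing $d(F,K)$ for the arc distance of $(F,K)$ and $\ell(F,K)$ for its level number, the genus-$g$ arc distance of $K$ equals $\min\{\,d(F,K) : (F,K)\in\mathcal{P}\,\}$ and the genus-$g$ level number of $K$ equals $\min\{\,\ell(F,K) : (F,K)\in\mathcal{P}\,\}$, both minima being over the same index set.

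First I would note that $\mathcal{P}$ is nonempty, since by hypothesis $K$ can be put in genus-$g$ $1$-bridge position, so both minima are attained. Next, since $K$ is nontrivial, Theorem~\ref{thm:nLevels} applies to each position $(F,K)\in\mathcal{P}$ and gives $d(F,K)=\ell(F,K)$. Taking the minimum over $\mathcal{P}$ on both sides of this equality yields the desired conclusion.

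There is essentially no obstacle here; the only point meriting any attention is to confirm that the two invariants are indeed indexed by literally the same family of $1$-bridge positions, which is immediate from the definitions in Sections~\ref{sec:leveling} and~\ref{sec:distance}. I would also remark that the nontriviality hypothesis on $K$ is used only to invoke Theorem~\ref{thm:nLevels}.
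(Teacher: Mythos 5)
Your proposal is correct and is exactly how the paper obtains the corollary: the paper derives it immediately from Theorem~\ref{thm:nLevels}, since both genus-$g$ invariants are minima of the positionwise arc distance and level number over the same family of genus-$g$ $1$-bridge positions, which the theorem shows agree position by position. Your added remarks (nonemptiness of the family, nontriviality only needed to invoke the theorem) are accurate but routine.
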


\bibliographystyle{amsplain}

\end{document}